\newtheorem{pr}{Proposition}
\newtheorem{de}{Definition}
\newtheorem{teo}{Theorem}
\newfont{\hueca}{msbm10}
\begin{document}
\title[Minimal linear representations of filiform Lie algebras]
{Minimal linear representations of filiform Lie algebras and their application for construction of Leibniz algebras}

\author{I.A.~Karimjanov, M.~Ladra  }
\address{[I.A. Karimjanov--M. Ladra] Department of Algebra, University of Santiago de Compostela, 15782, Santiago de Compostela, Spain}
\email{iqboli@gmail.com -- manuel.ladra@usc.es}

\thanks{The work was partially supported  was supported by Ministerio de Econom\'ia y Competitividad (Spain),
grant MTM2013-43687-P (European FEDER support included) and  by Xunta de Galicia, grant GRC2013-045 (European FEDER support included).}

\begin{abstract}
In this paper we find minimal faithful representations of several classes filiform Lie algebras by means of strictly upper-triangular matrices. We investigate Leibniz algebras whose corresponding Lie algebras are filiform Lie algebras such that the action $I \times L \to I$ gives rise to a minimal faithful representation of  a filiform Lie algebra. The classification up to isomorphism of such Leibniz algebras is given for  low-dimensional cases.
\end{abstract}

\subjclass[2010]{17A32, 17B30, 17B10}
\keywords{Lie algebra, Leibniz algebra, filiform algebra, minimal faithful representation}

\maketitle

\section{Introduction}

According Ado's Theorem, given any finite-dimensional complex Lie algebra $\mathfrak{g}$, there exists
a matrix algebra isomorphic to $\mathfrak{g}$. In this way, every finite-dimensional complex Lie algebra can be
represented as a Lie subalgebra of the complex general linear algebra $\mathfrak{gl}(n,\mathbb{C})$, formed by
all the complex $n\times n$ matrices, for some $n \in \mathbb{N}$. We
consider the following integer valued invariant of $\mathfrak{g}$:
$$ \mu(\mathfrak{g}) = \min\{ \dim(M) \mid M \ \text{is a faithful} \ \mathfrak{g}\text{-module} \}$$
It follows from the proof of Ado's Theorem that $\mu(\mathfrak{g})$ can be bounded by a function
depending on only $n$. This value is also equal to the minimal value $n$ such that $\mathfrak{gl}(\mathbb{C},n)$ contains a subalgebra
isomorphic to $\mathfrak{g}$.

Given a Lie algebra $\mathfrak{g}$, a representation of $\mathfrak{g}$ in $\mathbb{C}^n$ is a homomorphism of Lie algebras
$f \colon \mathfrak{g} \rightarrow \mathfrak{gl}(\mathbb{C}^n ) = \mathfrak{gl}(n,\mathbb{C})$. The natural integer $n$ is called the dimension of this representation. We consider faithful representations because such representations
allow us to identify a given Lie algebra with its image under the representation, which is
a Lie subalgebra of $\mathfrak{gl}(n,\mathbb{C})$.
Representations can be also defined by using arbitrary $n$-dimensional vector spaces $V$
(see \cite{FuHa91}). In such a case, a representation would be a homomorphism of Lie algebras from $\mathfrak{g}$
to the Lie algebra  of the endomorphisms of the vector space $V$, $\mathfrak{gl}(V )$, which is called a $\mathfrak{g}$-module.
However, it is sufficient to consider representations on $\mathbb{C}^n$ because there always exists a
unique $n \in \mathbb{N}$ such that $V$ is isomorphic to $\mathbb{C}^n$ .

Many works are devoted to find the value $\mu(\mathfrak{g})$ of several finite-dimensional Lie algebras. In \cite{Bur98},
 the value of $\mu(\mathfrak{g})$ for abelian Lie algebras and Heisenberg algebras is found, and moreover the estimated value of $\mu(\mathfrak{g})$ for filiform Lie algebras is given. In the works  \cite{BNT08,CNT13,GhTh15} the authors find the matrix representation of some low-dimensional Lie algebras.

In paper \cite{CNT13} the minimal faithful representation of the  filiform Lie algebra $\mathcal{L}_n$ is shown and the  authors denote $$\overline{\mu}(\mathfrak{g}) = \min\{ n \in \mathbb{N} \mid  \text{subalgebra of} \ g_n \ \text{isomorphic to} \ \mathfrak{g}\},$$ where $g_n$ is an upper triangular square matrix of dimension $n$. Moreover, they prove the next proposition
\begin{pr}[\cite{CNT13}] \label{eq} Let $\mathfrak{g}$ be an $n$-dimensional filiform Lie algebra. Then $\overline{\mu}(\mathfrak{g})\geq n$.
\end{pr}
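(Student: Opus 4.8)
The plan is to compare the lengths of the lower central series on the two sides of an embedding. Recall that an $n$-dimensional filiform Lie algebra $\mathfrak{g}$ has, by definition, a lower central series $\mathfrak{g}=\mathfrak{g}^{1}\supsetneq\mathfrak{g}^{2}\supsetneq\cdots$ with $\dim\mathfrak{g}^{i}=n-i$ for $2\le i\le n$; in particular $\mathfrak{g}^{\,n-1}\neq 0$ while $\mathfrak{g}^{\,n}=0$, so $\mathfrak{g}$ has nilpotency class exactly $n-1$. The other ingredient is the nilpotency class of $g_{m}$, the algebra of strictly upper-triangular $m\times m$ matrices: writing $E_{ij}$ for the matrix units, an easy induction shows that the $k$-th term of its lower central series $(g_{m})^{k}$ is spanned by those $E_{ij}$ with $j-i\ge k$, whence $(g_{m})^{m-1}=\langle E_{1m}\rangle\neq 0$ and $(g_{m})^{m}=0$; that is, $g_{m}$ is nilpotent of class exactly $m-1$.

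Next I would use the fact that nilpotency class cannot increase under passing to a subalgebra. If $\varphi\colon\mathfrak{g}\hookrightarrow g_{m}$ is an injective homomorphism of Lie algebras, then by induction on $k$ one gets $\varphi(\mathfrak{g}^{k})\subseteq (g_{m})^{k}$: the case $k=1$ is the hypothesis, and $\varphi(\mathfrak{g}^{k+1})=[\varphi(\mathfrak{g}),\varphi(\mathfrak{g}^{k})]\subseteq[g_{m},(g_{m})^{k}]=(g_{m})^{k+1}$. Since $\varphi$ is injective and $\mathfrak{g}^{\,n-1}\neq 0$, it follows that $(g_{m})^{\,n-1}\neq 0$, hence $n-1\le m-1$, i.e.\ $m\ge n$. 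Taking the minimum over all $m$ admitting such an embedding yields $\overline{\mu}(\mathfrak{g})\ge n$.

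The argument is short, and the only points requiring care are bookkeeping ones: fixing once and for all the indexing convention for the lower central series so that ``filiform'' really does translate into ``nilpotency class $n-1$'', and computing $(g_{m})^{k}$ correctly. There is no genuine obstacle here --- monotonicity of the nilpotency class under subalgebras does all the work. One should only keep in mind that this method exploits exactly the maximal-class property of filiform algebras, and will give nothing sharper than $\overline{\mu}(\mathfrak{g})\ge n$, which is precisely the content of the proposition.
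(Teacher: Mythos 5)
Your argument is correct: filiform of dimension $n$ forces nilpotency class exactly $n-1$, the strictly upper-triangular algebra $g_m$ has class $m-1$, and monotonicity of the lower central series under an injective homomorphism gives $m\ge n$. The paper itself states this proposition as a citation from \cite{CNT13} without reproducing a proof, and your nilpotency-class comparison is precisely the standard argument used there, so there is nothing to flag beyond noting that $g_m$ must indeed be read as the \emph{strictly} upper-triangular matrices (as in the cited source) for the class computation to apply.
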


 The paper is devoted to find minimal linear representations of some classes of filiform Lie algebras of dimension $n$. Exactly we find a minimal faithful representation of the filiform Lie algebras $\mathcal{Q}_{2n}, \mathcal{R}_{n}$ and $\mathcal{W}_n$. Moreover we construct Leibniz algebras using these representations of filiform Lie algebras.

 Leibniz algebras, which
are a non-antisymmetric generalization of Lie algebras, were introduced in 1965
by Bloh in \cite{Bloh}, who called them $D$-algebras and in 1993 Loday  \cite{Lod} made them
popular and studied their (co)homology.

\begin{de} An algebra $(L,[-,-])$ over a field  $\mathbb{F}$   is called a Leibniz algebra if for any $x,y,z\in L$, the so-called Leibniz identity
\[ \big[[x,y],z\big]=\big[[x,z],y\big]+\big[x,[y,z]\big] \] holds.
\end{de}

One of the method of classification Leibniz algebras is the study of Leibniz algebras with given corresponding Lie algebras.
In the papers \cite{ACK15,CCO16,ORT13,UKO15},  Leibniz algebras whose corresponding Lie algebras are naturally graded filiform Lie algebras $L_n$, Heisenberg algebras, simple Lie $\mathfrak{sl}_2$  and Diamond Lie algebras are studied.
Let ${ L}$ be a Leibniz algebra. The ideal $I$ generated by the squares of elements of the algebra $L$, that is by the set $\{[x,x]: x\in {L}\}$, plays an important role in the theory since it determines the (possible) non-Lie character of ${L}$. From the Leibniz identity, this ideal satisfies
$$[{L},I]=0.$$
Clearly, the quotient algebra $L / I$ is a Lie algebra, called the {\it corresponding Lie algebra} of $L$. The map $I \times L / I \to I$, $(i,\overline{x}) \mapsto [i,x]$ endows $I$ with a structure of $L / I$-module (see \cite{AAO05}).

Denote by $Q(L) = L / I \oplus I$. Then the operation $(-,-)$ defines a Leibniz algebra structure on $Q(L),$ where $$(\overline{x},\overline{y}) = \overline{[x,y]}, \quad (\overline{x},i) = [x,i],
\quad (i, \overline{x}) = 0, \quad (i,j) = 0, \qquad x, y \in L, \ i,j \in I.$$

Therefore, given a Lie algebra $G$ and a $G$-module $M,$ we can construct a Leibniz algebra $(G, M)$ by the above construction. The main problem which occurs in this connections is a description of a Leibniz algebra $L$, such that the corresponding Leibniz algebra $Q(L)$ is isomorphic to a priory given algebra $(G, M)$.

Now we give definitions of nilpotent and filiform Lie algebras.

For a Lie algebra $L$ consider the following  lower central series:
\[L^1=L,\qquad L^{k+1}=[L^1,L^k] \qquad k \geq 1.\]

\begin{de} A Lie algebra $L$ is called nilpotent if there exists  $s\in\mathbb N $ such that $L^s=0$.
\end{de}

\begin{de} A Lie algebra $L$ is said to be filiform if $\dim L^i=n-i$, where $n=\dim L$ and $2\leq i \leq n$.
\end{de}

We list some classes of $n$-dimension filiform Lie algebras with  basis $\{e_1,\dots,e_n\}$.

1. Let $\mathcal{L}_n$ be the Lie algebra defined by
$$[e_1,e_i]=-[e_i,e_1]=e_{i+1}, \quad 2\leq i \leq n-1.$$

2. Let $\mathcal{Q}_{2s} \ (n=2s)$ be the nilpotent Lie algebra defined  by
\begin{align*}
[e_1,e_i] & =  -[e_i,e_1]=e_{i+1}, && 2\leq i \leq 2s-2, \\
[e_{2s+1-i},e_i]  & =  -[e_i,e_{2s+1-i}]=(-1)^i\,e_{2s},&& 2\leq i
\leq s.\end{align*}

3. Let $\mathcal{R}_{n}$ be defined  by
$$\begin{array}{ll}
[e_1,e_i] =  -[e_i,e_1]=e_{i+1},& 2\leq i \leq n-1,\\[1mm]
 [e_2,e_i] =  -[e_i,e_2]=e_{i+2},& 3\leq i
\leq n-2.\end{array}$$

4. Let $\mathcal{W}_n$ be the Lie algebra whose brackets in the basis are:
$$[e_i,e_j]=-[e_j,e_i]=(j-i)e_{i+j}, \quad i+j\leq n.$$

The algebras $\mathcal{L}_n$ and $\mathcal{Q}_{2n}$ are naturally graded filiform Lie algebras. The algebra $\mathcal{W}_n$ is the finite-dimensional Witt algebra.

\section{Minimum linear representation of filiform Lie algebras}

\begin{pr}
Let $\mathcal{Q}_{2n}$ be a $2n$-dimensional filiform Lie algebra with basis $\{e_i\}^{2n}_{i=1}$. Then its minimal faithful representation is given by
$$a_1e_1+a_2e_2+\dots+a_{2n}e_{2n}\mapsto\left(\begin{matrix}
0&a_2&-a_3&\dots&a_{2n-2}&-a_{2n-1}&-2a_{2n}\\
0&0&a_1&\dots&0&0&a_{2n-1}\\
0&0&0&\dots&0&0&a_{2n-2}\\
\vdots&\vdots&\vdots&\ddots&\vdots&\vdots&\vdots\\
0&0&0&\dots&0&a_1&a_3\\
0&0&0&\dots&0&0&a_2\\
0&0&0&\dots&0&0&0
\end{matrix}\right).$$
\end{pr}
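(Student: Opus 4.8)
The plan is to verify directly that the displayed linear map $\phi\colon\mathcal{Q}_{2n}\to\mathfrak{gl}(2n,\mathbb{C})$ is a homomorphism of Lie algebras and is injective, and then to invoke Proposition \ref{eq} to conclude that its dimension $2n$ is minimal. First I would rewrite $\phi$ on basis vectors in terms of the matrix units $E_{pq}$; reading off the columns of the matrix gives
\[
\phi(e_1)=\sum_{k=2}^{2n-2}E_{k,k+1},\qquad \phi(e_j)=(-1)^{j}E_{1,j}+E_{2n+1-j,\,2n}\quad(2\le j\le 2n-1),\qquad \phi(e_{2n})=-2E_{1,2n}.
\]
Injectivity is then transparent: for $x=\sum_i a_ie_i$ the scalar $a_1$ is the $(2,3)$-entry of $\phi(x)$, the scalars $a_2,\dots,a_{2n-1}$ are the first-row entries up to sign, and $a_{2n}=-\tfrac12$ times the $(1,2n)$-entry. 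Moreover every $\phi(e_i)$ is strictly upper triangular, so once $\phi$ is shown to be a homomorphism its image is a $2n$-dimensional subalgebra of $g_{2n}$ isomorphic to $\mathcal{Q}_{2n}$; since $\mathcal{Q}_{2n}$ is $2n$-dimensional and filiform, Proposition \ref{eq} gives $\overline{\mu}(\mathcal{Q}_{2n})\ge 2n$, so $\phi$ attains the minimum and is a minimal faithful representation.

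The core of the proof is checking $\phi([e_i,e_j])=[\phi(e_i),\phi(e_j)]$, which by bilinearity need only be done on pairs of basis vectors, using $E_{pq}E_{rs}=\delta_{qr}E_{ps}$. I would split this into three cases. For the chain relations $[e_1,e_i]=e_{i+1}$ with $2\le i\le 2n-2$, one computes $\phi(e_1)\phi(e_i)=E_{2n-i,\,2n}$ and $\phi(e_i)\phi(e_1)=(-1)^{i}E_{1,i+1}$, hence $[\phi(e_1),\phi(e_i)]=-(-1)^{i}E_{1,i+1}+E_{2n-i,\,2n}=\phi(e_{i+1})$. For the relations $[e_{2n+1-i},e_i]=(-1)^{i}e_{2n}$ with $2\le i\le n$, note that $n+1\le 2n+1-i\le 2n-1$ and $(-1)^{2n+1-i}=(-1)^{i+1}$, so $\phi(e_{2n+1-i})=(-1)^{i+1}E_{1,2n+1-i}+E_{i,2n}$; a short computation gives $\phi(e_{2n+1-i})\phi(e_i)=(-1)^{i+1}E_{1,2n}$ and $\phi(e_i)\phi(e_{2n+1-i})=(-1)^{i}E_{1,2n}$, whence $[\phi(e_{2n+1-i}),\phi(e_i)]=2(-1)^{i+1}E_{1,2n}=(-1)^{i}\phi(e_{2n})$. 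Finally, for every remaining pair of basis vectors a direct inspection of the possible products $E_{1,j}E_{rs}$, $E_{p,2n}E_{rs}$ and $E_{k,k+1}E_{rs}$ shows that both $\phi(e_i)\phi(e_j)$ and $\phi(e_j)\phi(e_i)$ vanish, matching $[e_i,e_j]=0$ in $\mathcal{Q}_{2n}$; in particular $\phi(e_{2n})$ multiplies to zero against everything, reflecting the centrality of $e_{2n}$.

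The only subtle point is the factor $2$ in the second case: the products $\phi(e_{2n+1-i})\phi(e_i)$ and $\phi(e_i)\phi(e_{2n+1-i})$ both land on the entry $E_{1,2n}$ but with opposite signs, so in the commutator they add instead of cancelling; this is exactly why the $(1,2n)$-slot of the representing matrix must carry $-2a_{2n}$ rather than $-a_{2n}$, and keeping the identity $(-1)^{2n+1-i}=(-1)^{i+1}$ straight is what makes the resulting coefficient equal to $(-1)^i$ on $\phi(e_{2n})$. Everything else reduces to routine bookkeeping with matrix units, and together with the dimension count and Proposition \ref{eq} this establishes that the displayed map is a minimal faithful representation of $\mathcal{Q}_{2n}$.
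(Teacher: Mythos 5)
Your proposal is correct and follows essentially the same route as the paper: you read off the same assignment $\varphi(e_1)=\sum_{k=2}^{2n-2}E_{k,k+1}$, $\varphi(e_j)=(-1)^jE_{1,j}+E_{2n+1-j,2n}$, $\varphi(e_{2n})=-2E_{1,2n}$, verify the bracket relations with matrix units, and invoke Proposition~\ref{eq} for minimality. The only difference is that you actually carry out the commutator computations (correctly, including the factor $2$ forcing the $-2a_{2n}$ entry) where the paper merely asserts that the check has been done.
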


\begin{proof}
Consider a bilinear map $\varphi \colon Q_{2n}\rightarrow\mathfrak{gl}_{2n}$ given by
$$\varphi(e_1) = \sum\limits_{k=2}^{2n-2}E_{k,k+1},\quad \varphi(e_i) = (-1)^iE_{1,i}+E_{2n-i+1,2n}\quad 2 \leq i \leq 2n-1, \quad \varphi(e_{2n})=-2E_{1,2n},$$
where $E_{i,j}$ is the matrix with  ($i, j$)-th entry equal to $1$ and others zero.

Checking $[\varphi(e_i),\varphi(e_j)]=\varphi(e_i)\varphi(e_j)-\varphi(e_j)\varphi(e_i)$ for all  $1\leq i,j \leq 2n,$ we verify that $\varphi$ is an isomorphism of algebras. Then by Proposition \ref{eq} we obtain that it is minimal.
\end{proof}

Let us denote by $V=\mathbb{C}^{2n}$ the natural $\varphi(Q_{2n})$-module and endow it with a $Q_{2n}$-module structure by  $$(x, e) =  x \varphi(e).$$ Then we obtain
\begin{equation}\label{eq1}\left\{\begin{array}{ll}(x_i, e_1) = x_{i+1}, & 2 \leq i \leq 2n-2,\\[1mm]
(x_1,e_i) = (-1)^i x_i, & 2 \leq i \leq 2n-1,\\[1mm]
(x_{2n+1-i}, e_i) = x_{2n}, & 2 \leq i \leq 2n-1,\\[1mm]
(x_{1},e_{2n}) = -2x_{2n}, &
\end{array}\right.\end{equation}
and the remaining products are zero.

\begin{pr}
Let $\mathcal{R}_{n}$ be a $n$-dimensional filiform Lie algebra with basis $\{e_i\}^{n}_{i=1}$. Then its minimal faithful representation is given by
$$a_1e_1+a_2e_2+\dots+a_{n}e_{n}\mapsto\left(\begin{matrix}
0&a_1&a_2&0&\dots&0&0&0&a_{n}\\
0&0&a_1&a_2&\dots&0&0&0&a_{n-1}\\
0&0&0&a_1&\dots&0&0&0&a_{n-2}\\
0&0&0&0&\dots&0&0&0&a_{n-3}\\
\vdots&\vdots&\vdots&\vdots&\ddots&\vdots&\vdots&\vdots&\vdots\\
0&0&0&0&\dots&0&a_1&a_2&a_4\\
0&0&0&0&\dots&0&0&a_1&a_3\\
0&0&0&0&\dots&0&0&0&a_2\\
0&0&0&0&\dots&0&0&0&0
\end{matrix}\right).$$
\end{pr}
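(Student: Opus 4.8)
The plan is to follow the same strategy as in the preceding proposition: produce an explicit injective homomorphism $\varphi\colon\mathcal{R}_n\to\mathfrak{gl}_n$ whose image consists of strictly upper-triangular matrices, and then quote Proposition~\ref{eq} to conclude minimality. The map I would use is the linearization of
$$\varphi(e_1)=\sum_{i=1}^{n-2}E_{i,i+1},\qquad \varphi(e_2)=\sum_{i=1}^{n-3}E_{i,i+2}+E_{n-1,n},\qquad \varphi(e_k)=E_{n+1-k,\,n}\ \ (3\le k\le n),$$
where $E_{i,j}$ is the matrix unit. Expanding $\varphi(a_1e_1+\cdots+a_ne_n)$ one checks directly that it equals the matrix in the statement: $a_1$ occupies the superdiagonal positions $(i,i+1)$ for $1\le i\le n-2$, $a_2$ occupies the positions $(i,i+2)$ for $1\le i\le n-3$ and also $(n-1,n)$, and $a_k$ with $3\le k\le n$ occupies the single last-column position $(n+1-k,n)$. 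Since these $n$ matrices are linearly independent, $\varphi$ is injective, hence the representation is faithful, and the matrices are evidently strictly upper triangular.

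Next I would verify that $\varphi$ respects the brackets of $\mathcal{R}_n$, using only $E_{i,j}E_{k,l}=\delta_{jk}E_{i,l}$. There are three things to confirm: (i) $[\varphi(e_1),\varphi(e_k)]=\varphi(e_{k+1})$ for $2\le k\le n-1$; (ii) $[\varphi(e_2),\varphi(e_k)]=\varphi(e_{k+2})$ for $3\le k\le n-2$; and (iii) every other bracket of basis images vanishes. Items (i) with $k\ge 3$ and (ii) are immediate: $\varphi(e_k)=E_{n+1-k,n}$ is supported in the last column, left multiplication by $\varphi(e_1)$ or by $\varphi(e_2)$ just shifts the row index (producing $\varphi(e_{k+1})$, resp.\ $\varphi(e_{k+2})$, as long as the shifted index stays in range), and right multiplication annihilates it. Item (iii) is also fast: a product of two matrices supported in the last column is zero, the superdiagonal band of $\varphi(e_1)$ only touches rows $\le n-2$ and the distance-two band of $\varphi(e_2)$ only rows $\le n-3$, so for instance $[\varphi(e_2),\varphi(e_{n-1})]=[\varphi(e_2),\varphi(e_n)]=[\varphi(e_1),\varphi(e_n)]=0$, exactly as in $\mathcal{R}_n$. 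Once all relations are checked, $\varphi(\mathcal{R}_n)$ is a subalgebra of the strictly upper-triangular $n\times n$ matrices isomorphic to $\mathcal{R}_n$, so by Proposition~\ref{eq}, which gives $\overline{\mu}(\mathcal{R}_n)\ge n$, this representation is minimal.

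The only step that genuinely requires attention is the case $k=2$ of (i), namely $[\varphi(e_1),\varphi(e_2)]=\varphi(e_3)=E_{n-2,n}$. Here $\varphi(e_1)\varphi(e_2)$ contains a shifted band $\sum_{i=1}^{n-4}E_{i,i+3}$, which has to cancel exactly against the analogous part of $\varphi(e_2)\varphi(e_1)$, together with the extra term $E_{n-2,n-1}E_{n-1,n}=E_{n-2,n}$ coming from the summand $E_{n-1,n}$ of $\varphi(e_2)$. Thus the entry $E_{n-1,n}$ in $\varphi(e_2)$ is not ad hoc: it is precisely what realises $e_3$ as a bracket, and one must check in addition that it does not pollute any bracket $[\varphi(e_2),\varphi(e_k)]$ with $k\ge 3$. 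Carrying out this bookkeeping uniformly in $n$ --- and observing that for the small values $n=3,4$ some of the index ranges above become empty, which causes no trouble --- is the main, but entirely mechanical, obstacle.
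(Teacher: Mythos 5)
Your proposal is correct and follows essentially the same route as the paper: the paper's proof defines exactly the same map ($\psi(e_1)=\sum_{i=1}^{n-2}E_{i,i+1}$, $\psi(e_2)=\sum_{i=1}^{n-3}E_{i,i+2}+E_{n-1,n}$, $\psi(e_k)=E_{n+1-k,n}$ for $3\le k\le n$), states that one checks all brackets, and invokes Proposition~\ref{eq} for minimality. Your write-up simply carries out the bracket verification in more detail than the paper does, correctly identifying the role of the $E_{n-1,n}$ summand in realising $e_3=[e_1,e_2]$.
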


\begin{proof}
We take bilinear map $\psi \colon \mathcal{R}_{n} \rightarrow \mathfrak{gl}(n,\mathbb{C})$ given by
$$\psi(e_1) = \sum\limits_{i=1}^{n-2}E_{i,i+1},\quad \psi(e_2)=\sum\limits_{i=1}^{n-3}E_{i,i+2}+E_{n-1,n}, \quad \psi(e_i) = E_{n+1-i,n}, \quad 3 \leq i \leq n.$$

Checking $[\psi(e_i),\psi(e_j)]=\psi(e_i)\psi(e_j)-\psi(e_j)\psi(e_i)$ for all  $1\leq i,j \leq n,$ we verify that $\psi$ is an isomorphism of algebras. Then by Proposition \ref{eq} we obtain that it is minimal.\end{proof}

Now, we construct a module $V \times \mathcal{R}_{n} \rightarrow V,$ such that $$(x, e) =  x \varphi(e) .$$ Then we obtain
\begin{equation*} \left\{\begin{array}{ll}
(x_{i}, e_1) = x_{i+1}, & 1 \leq i \leq n-2,\\[1mm]
(x_i, e_2) = x_{i+2}, & 1 \leq i \leq n-3,\\[1mm]
(x_{n+1-j}, e_j) = x_n, & 2 \leq j \leq n.
\end{array}\right.\end{equation*}
the remaining products in the action being zero.

Denote  by $C^n_{m}=\binom{m}{n}$ the binomial coefficient.
\begin{pr}
Let $\mathcal{W}_{n}$ be an $n$-dimensional filiform Lie algebra with basis $\{e_i\}^n_{i=1}$. Then $\mathcal{W}_{n}$ is isomorphic to a subalgebra of $\mathfrak{gl}(n,\mathbb{C})$ by $\varphi:$
$$\varphi(e_1) = \sum\limits_{k=1}^{n-2}E_{k,k+1},\quad \varphi(e_2)=\sum\limits_{k=1}^{n-3}\frac{1}{n-k}E_{k,k+2}+E_{n-1,n},$$
$$\varphi(e_i)=\frac{1}{(i-2)!}\Big(\sum\limits_{k=1}^{n-i-1}\big(\sum\limits_{s=0}^{i-2}\frac{(-1)^{i+s}C^s_{i-2}}{n-k-s}\big)E_{k,k+i}+E_{n+1-i,n}\Big), \quad 3\leq i\leq n,$$
and this faithful representation is minimal.
\end{pr}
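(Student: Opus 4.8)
The plan is to verify directly that the prescribed linear map $\varphi\colon\mathcal{W}_n\to\mathfrak{gl}(n,\mathbb{C})$ is an injective Lie algebra homomorphism, and then invoke Proposition \ref{eq} to conclude minimality, exactly as in the two previous propositions. Injectivity is immediate: the images $\varphi(e_1),\dots,\varphi(e_n)$ are linearly independent because $\varphi(e_i)$ is the only one among them having a nonzero entry in position $(n+1-i,n)$ for $3\le i\le n$ (and $\varphi(e_1),\varphi(e_2)$ are visibly independent of each other and of the rest), so $\ker\varphi=0$. Thus the whole content is the homomorphism check $[\varphi(e_i),\varphi(e_j)]=(j-i)\,\varphi(e_{i+j})$ whenever $i+j\le n$, and $[\varphi(e_i),\varphi(e_j)]=0$ whenever $i+j>n$; since $\varphi(e_i)$ is strictly upper triangular with nonzero entries only on the $i$-th and $(n-1)$-st superdiagonals, once $i+j>n$ the "interior" contributions vanish by degree reasons and the $(n-1)$-superdiagonal parts commute, giving the second relation for free.

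For the main relation I would split $\varphi(e_i)$ as $A_i+B_i$ where $A_i=\frac{1}{(i-2)!}\sum_{k=1}^{n-i-1}\big(\sum_{s=0}^{i-2}\frac{(-1)^{i+s}C^s_{i-2}}{n-k-s}\big)E_{k,k+i}$ is the genuine $i$-th superdiagonal part and $B_i=\frac{1}{(i-2)!}E_{n+1-i,n}$ is the single entry on the top-right diagonal (with the conventions $A_1=\sum_{k=1}^{n-2}E_{k,k+1}$, $A_2=\sum_{k=1}^{n-3}\frac{1}{n-k}E_{k,k+2}$, $B_1=0$, $B_2=E_{n-1,n}$). Then $[\varphi(e_i),\varphi(e_j)]=[A_i,A_j]+[A_i,B_j]+[B_i,A_j]$ since $[B_i,B_j]=0$, and $[A_i,B_j]+[B_i,A_j]$ produces a multiple of $E_{n+1-i-j,n}$ whose coefficient must be matched against $\frac{j-i}{(i+j-2)!}$; this is a short computation because $A_i E_{n+1-j,n}=\alpha_i(n-j)\,E_{n+1-i-j,n}$ with $\alpha_i(k)$ the explicit coefficient of $E_{k,k+i}$ in $A_i$, so one only needs the single algebraic identity $\frac{1}{(j-2)!}\alpha_i(n-j)-\frac{1}{(i-2)!}\alpha_j(n-i)=\frac{j-i}{(i+j-2)!}$ (understood with the degenerate cases $i,j\in\{1,2\}$ handled separately).

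The real obstacle, and the heart of the proof, is the identity $[A_i,A_j]=(j-i)\,A_{i+j}$ on the interior superdiagonals, which reduces to showing that for each $k$ the coefficient of $E_{k,k+i+j}$ in $A_iA_j-A_jA_i$ equals $\frac{j-i}{(i+j-2)!}\sum_{s=0}^{i+j-2}\frac{(-1)^{i+j+s}C^s_{i+j-2}}{n-k-s}$. Writing everything out, this becomes a partial-fractions identity: the left side is $\frac{1}{(i-2)!(j-2)!}\sum_{s=0}^{i-2}\sum_{t=0}^{j-2}(-1)^{i+j+s+t}C^s_{i-2}C^t_{j-2}\big(\frac{1}{(n-k-s)(n-k-i-s-t)}-\frac{1}{(n-k-t)(n-k-j-s-t)}\big)$, and one wants to collect it into simple fractions $\frac{c_r}{n-k-r}$ for $0\le r\le i+j-2$. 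I would introduce the generating-function substitution $\frac{1}{n-k-s}=\int_0^1 x^{n-k-s-1}\,dx$ so that $A_i$ corresponds (up to the factor $x^{n-k-1}$) to the polynomial $\frac{(1-x)^{i-2}}{(i-2)!}\cdot(-1)^i$ — this is the Vandermonde/Beta-integral trick — and then the commutator identity collapses to the elementary polynomial statement that the appropriate bilinear combination of $(1-x)^{i-2}$ and $(1-x)^{j-2}$ (with the shifts coming from matrix multiplication) reproduces $(j-i)(1-x)^{i+j-2}$. Once phrased this way the verification is a routine manipulation of binomial sums (repeatedly using $\sum_s(-1)^sC^s_{m}\binom{\,\cdot\,}{\,\cdot\,}$-type convolutions, i.e. the Vandermonde identity), so I would state the key lemma, indicate the integral representation, and leave the bookkeeping to the reader; the boundary cases where one of $i,j$ is $1$ or $2$ (so that $A_i$ lives on the first or second superdiagonal and the formula for $\varphi(e_i)$ takes its special form) I would dispatch by direct inspection.
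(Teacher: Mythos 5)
Your overall strategy---direct verification that $\varphi$ is an injective homomorphism followed by an appeal to Proposition \ref{eq}---is the same as the paper's, and your treatment of injectivity, of the vanishing for $i+j>n$, and of the boundary coefficient of $E_{n+1-i-j,n}$ is sound. (The paper proceeds a little differently in that it leaves the coefficients of $\varphi(e_2)$ as unknowns $\alpha_s$, derives the formula for $\varphi(e_i)$, $i\geq 3$, by induction on brackets with $\varphi(e_1)$, and then solves the resulting system \eqref{eq6}--\eqref{eq7}; you verify the stated formula directly, which is equally legitimate.)

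However, the step you single out as the heart of the proof does not work as described. Writing $\frac{1}{n-k-s}=\int_0^1 x^{n-k-s-1}\,dx$ turns each coefficient $\alpha_i(k)$ into a single integral, but the entry of $A_iA_j$ on the $(i+j)$-th superdiagonal is the product $\alpha_i(k)\,\alpha_j(k+i)$ of two such numbers, i.e.\ a product of two integrals over \emph{independent} variables (a double integral), not the integral of a product of polynomials. Consequently the commutator identity does not collapse to a one-variable polynomial identity, and the identity you propose is false on its face: the bilinear combination of $(1-x)^{i-2}$ and $(1-x)^{j-2}$ has $(1-x)$-order $i+j-4$, and antisymmetrizing in $i\leftrightarrow j$ contributes only one further factor of $(1-x)$, so it cannot reproduce a multiple of $(1-x)^{i+j-2}$. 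The correct use of your Beta integral is to evaluate it \emph{before} multiplying matrix entries: $\frac{1}{(i-2)!}\sum_{s=0}^{i-2}\frac{(-1)^{i+s}C^s_{i-2}}{n-k-s}=\frac{1}{(i-2)!}\int_0^1x^{n-k-i+1}(1-x)^{i-2}\,dx=\frac{(n-k-i+1)!}{(n-k)!}$, which is exactly the paper's identity \eqref{eq8}. With the closed form $c_i(k)=\frac{(n-k-i+1)!}{(n-k)!}$ in hand (valid also for $i=1,2$), the interior identity becomes the one-line telescoping computation $c_i(k)c_j(k+i)-c_j(k)c_i(k+j)=\frac{(n-k-i-j+1)!}{(n-k)!}\big((n-k-i+1)-(n-k-j+1)\big)=(j-i)\,c_{i+j}(k)$. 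So the needed ingredient is present in your write-up, but it must be applied first to convert each alternating binomial sum into a ratio of factorials; as stated, the central reduction would fail.
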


\begin{proof}
We take the isomorphism $\varphi \colon \mathcal{W}_{n} \rightarrow \mathfrak{gl}(n,\mathbb{C})$ such that
$$\varphi(e_1) = \sum\limits_{k=1}^{n-2}E_{k,k+1},\quad \varphi(e_2)=\sum\limits_{s=1}^{n-3}\alpha_sE_{s,s+2}+E_{n-1,n}.$$

Now we consider

$$\varphi(e_3)=[\varphi(e_1),\varphi(e_2)]=\varphi(e_1)\varphi(e_2)-\varphi(e_2)\varphi(e_1)=\big(\sum\limits_{k=1}^{n-2}E_{k,k+1}\big) \ \big(\sum\limits_{s=1}^{n-3}\alpha_sE_{s,s+2}+E_{n-1,n}\big)-$$
$$\big(\sum\limits_{s=1}^{n-3}\alpha_sE_{s,s+2}+E_{n-1,n}\big) \ \big(\sum\limits_{k=1}^{n-2}E_{k,k+1}\big)=\sum\limits_{k=1}^{n-4}(\alpha_{k+1}-\alpha_k)E_{k,k+3}+E_{n-2,n},$$

\begin{multline*}
 \varphi(e_4)=\frac{1}{2}[\varphi(e_1),\varphi(e_3)]=\frac{1}{2}(\varphi(e_1)\varphi(e_3)-\varphi(e_3)\varphi(e_1))=\frac{1}{2}\Big(\big(\sum\limits_{s=1}^{n-2}E_{s,s+1}\big) \ \big(\sum\limits_{k=1}^{n-4}(\alpha_{k+1}-\alpha_k)E_{k,k+3}+E_{n-2,n}\big)\\
 - \big(\sum\limits_{k=1}^{n-4}(\alpha_{k+1}-\alpha_k)E_{k,k+3}+E_{n-2,n}\big) \ \big(\sum\limits_{s=1}^{n-2}E_{s,s+1}\big)\Big)=\frac{1}{2}\big(\sum\limits_{k=1}^{n-5}(\alpha_k-2\alpha_{k+1}+\alpha_{k+2})E_{k,k+4}+E_{n-3,n}\big),
\end{multline*}

Let us suppose that
$$\varphi(e_i)=\frac{1}{(i-2)!}\Big(\sum\limits_{p=1}^{n-i-1}\big(\sum\limits_{s=0}^{i-2}(-1)^{i+s}C^s_{i-2}\alpha_{p+s}\big)E_{p,p+i}+E_{n+1-i,n}\Big), \quad 3\leq i\leq n.$$

Let us we suppose the previous equality true for $i=k$ and we will consider for $i=k+1$.
\begin{multline*}
\varphi(e_{k+1})=\frac{1}{k-1}[\varphi(e_1),\varphi(e_k)]=\frac{1}{k-1}\big(\varphi(e_1)\varphi(e_k)-\varphi(e_k)\varphi(e_1)\big)\\
=\frac{1}{k-1}
\Big(\frac{1}{(k-2)!}\big(\sum\limits_{t=1}^{n-2}E_{t,t+1} \big)\big(\sum\limits_{p=1}^{n-k-1}\big(\sum\limits_{s=0}^{k-2}(-1)^{k+s}C^s_{k-2}\alpha_{p+s}\big)E_{p,p+k}+
E_{n+1-k,n}\big) \\
- \frac{1}{(k-2)!}\big(\sum\limits_{p=1}^{n-k-1}\big(\sum\limits_{s=0}^{k-2}(-1)^{k+s}C^s_{k-2}\alpha_{p+s}\big)E_{p,p+k}+
E_{n+1-k,n}\big)\big(\sum\limits_{t=1}^{n-2}E_{t,t+1}\big)\Big)\\
= \frac{1}{(k-1)!}\Big(\sum\limits_{p=1}^{n-k-2}\big(\sum\limits_{s=0}^{k-2}(-1)^{k+s}C^s_{k-2}(\alpha_{p+s+1}-\alpha_{p+s})\big)E_{p,p+k+1}+E_{n-k,n}\Big)\\
= \frac{1}{(k-1)!}\Big(\sum\limits_{p=1}^{n-k-2}\big(\sum\limits_{s=0}^{k-1}(-1)^{k+s+1}C^s_{k-1}\alpha_{p+s}\big)E_{p,p+k+1}+E_{n-k,n}\Big).
\end{multline*}
From the multiplications, where $i+j\leq n$

\begin{multline*}
[\varphi(e_i),\varphi(e_j)]=\varphi(e_i)\varphi(e_j)-\varphi(e_j)\varphi(e_i) \\
=\frac{1}{(i-2)! \ (j-2)!}
\Big(\Big(\sum\limits_{p=1}^{n-i-1}\big(\sum\limits_{s=0}^{i-2}(-1)^{i+s}C^s_{i-2}\alpha_{p+s}\big)E_{p,p+i}+E_{n+1-i,n}\Big)\\
\Big(\sum\limits_{q=1}^{n-j-1}\big(\sum\limits_{r=0}^{j-2}(-1)^{j+r}C^r_{j-2}\alpha_{q+r}\big)E_{q,q+j}+E_{n+1-j,n}\Big)\\-
\Big(\sum\limits_{q=1}^{n-j-1}\big(\sum\limits_{r=0}^{j-2}(-1)^{j+r}C^r_{j-2}\alpha_{q+r}\big)E_{q,q+j}+E_{n+1-j,n}\Big)\\
\Big(\sum\limits_{p=1}^{n-i-1}\big(\sum\limits_{s=0}^{i-2}(-1)^{i+s}C^s_{i-2}\alpha_{p+s}\big)E_{p,p+i}+E_{n+1-i,n}\Big)\Big)\\
=\frac{1}{(i-2)! \ (j-2)!}
\bigg(\sum\limits_{p=1}^{n-i-j-1}\Big(\big(\sum\limits_{s=0}^{i-2}(-1)^{i+s}C^s_{i-2}\alpha_{p+s}\big)
\big(\sum\limits_{r=0}^{j-2}(-1)^{j+r}C^r_{j-2}\alpha_{p+i+r}\big)\\
- \big(\sum\limits_{r=0}^{j-2}(-1)^{j+r}C^r_{j-2}\alpha_{p+r}\big)\big(\sum\limits_{s=0}^{i-2}(-1)^{i+s}C^s_{i-2}\alpha_{p+j+s}\big)\Big)E_{p,p+i+j} \\
+ \Big(\sum\limits_{s=0}^{i-2}(-1)^{i+s}C^s_{i-2}\alpha_{n+s+1-i-j}-\sum\limits_{r=0}^{j-2}(-1)^{j+r}C^r_{j-2}\alpha_{n+r+1-i-j}\Big)E_{n+1-i-j,n}\bigg).
\end{multline*}

On the other hand
$$[\varphi(e_i),\varphi(e_j)]=(j-i)\varphi(e_{i+j})=\frac{(j-i)}{(i+j-2)!}\Big(\sum\limits_{p=1}^{n-i-j-1}\big(\sum\limits_{s=0}^{i+j-2}(-1)^{i+j+s}C^s_{i+j-2}\alpha_{p+s}\big)E_{p,p+i+j}+E_{n+1-i-j,n}\Big).$$

Next, we have the following system of equations
\begin{equation}\label{eq6}\begin{array}{l}
\big(\sum\limits_{s=0}^{i-2}(-1)^{i+s}C^s_{i-2}\alpha_{p+s}\big)
\big(\sum\limits_{r=0}^{j-2}(-1)^{j+r}C^r_{j-2}\alpha_{p+i+r}\big)-\big(\sum\limits_{r=0}^{j-2}(-1)^{j+r}C^r_{j-2}\alpha_{p+r}\big) \\[5mm]
\big(\sum\limits_{s=0}^{i-2}(-1)^{i+s}C^s_{i-2}\alpha_{p+j+s}\big)+\displaystyle\frac{(i-j)(i-2)! \ (j-2)!}{(i+j-2)!}\sum\limits_{s=0}^{i+j-2}(-1)^{i+j+s}C^s_{i+j-2}\alpha_{p+s}=0,
\end{array}\end{equation}
where $1\leq p\leq n-i-j-1, \quad i+j\leq n-2$.

\begin{equation}\label{eq7}\begin{array}{ll}\sum\limits_{r=0}^{j-2}(-1)^{j+r}C^r_{j-2}\alpha_{n+r+1-i-j}-\sum\limits_{s=0}^{i-2}(-1)^{i+s}C^s_{i-2}\alpha_{n+s+1-i-j}=\displaystyle\frac{(i-j)(i-2)! \ (j-2)!}{(i+j-2)!}, & i+j\leq n.
\end{array}\end{equation}

One of the solutions of the system of equations \eqref{eq6} and \eqref{eq7} is
$$\alpha_i= \frac{1}{n-i}, \quad 1\leq i\leq n-3.$$

Now we will check it. We using the next property of binomial coefficients
\begin{equation}\label{eq8}
\sum\limits_{k=0}^m \frac{(-1)^kC_m^k}{x+k}= \frac{m!}{x(x+1)\cdots(x+m)}, \qquad x \notin \{0,-1,\dots,-m\}.
\end{equation}

By putting all the values of $\alpha_i$ in the system \eqref{eq6}--\eqref{eq7}, and by using the property \eqref{eq8}, we get
\begin{multline*}
\frac{(i-2)!}{(n-i-p+2)(n-i-p+3)\cdots(n-p)} \cdot \frac{(j-2)!}{(n-i-j-p+2)(n-i-j-p+3)\cdots(n-i-p)}\\[4mm]
- \frac{(j-2)!}{(n-j-p+2)(n-j-p+3)\cdots(n-p)} \cdot \displaystyle\frac{(i-2)!}{(n-i-j-p+2)(n-i-j-p+3)\cdots(n-j-p)}\\[4mm]
+ \frac{(i-j)(i-2)! \ (j-2)!}{(n-i-j-p+2)(n-i-j-p+3)\cdots(n-p)}=0, \quad 1\leq p\leq n-i-j-1, \quad i+j\leq n-2,
\end{multline*}
and
\[
\frac{(j-2)!}{(i+1)(i+2)\cdots(i+j-1)}-\frac{(i-2)!}{(j+1)(j+2)\cdots(i+j-1)}=\frac{(i-j)(i-2)! \ (j-2)!}{(i+j-2)!}, \quad i+j\leq n.
\]
So, the values of $\alpha_i$ satisfy  the system of equations \eqref{eq6}--\eqref{eq7}.

From Proposition \ref{eq} we get that this representation is minimal.
\end{proof}

Now, we construct a module $V \times \mathcal{W}_{n} \rightarrow V,$ such that $$(x, e) =   x \varphi(e).$$ Then we obtain
\begin{equation*}\left\{\begin{array}{ll}
(x_{i}, e_1) = x_{i+1}, & 1 \leq i \leq n-2,\\[1mm]
(x_i, e_2) = \frac {1}{n-i}x_{i+2}, & 1 \leq i \leq n-3,\\[1mm]
(x_i, e_j)=\frac{1}{(j-2)!}\sum\limits_{s=0}^{j-2}\frac{(-1)^{j+s}C_{j-2}^s}{n-i-s}x_{i+j}, & 3 \leq j \leq n-2, \quad 1\leq i\leq n-j-1,\\[1mm]
(x_{n+1-j}, e_j) = \frac{1}{(j-2)!}x_n , & 2 \leq j \leq n,
\end{array}\right.\end{equation*}
and the remaining products in the action are zero.

\section{Leibniz algebras constructed by minimal faithful representations of Lie algebra}

Now we investigate Leibniz algebras $L$ such that  $L/ I \cong \mathcal{Q}_{2n}$ and $I=V$ as a $\mathcal{Q}_{2n}$-module.

Further we define the multiplications $[e_i, e_j]$ for $1
\leq i,j \leq 2n.$ We put
\begin{equation}\label{eq0}[e_i,e_j]=\left\{\begin{array}{ll}
e_{i+1} + \sum\limits_{k=1}^{2n} \alpha_{i,1}^k x_k, & i=1, \  2\leq j \leq 2n-2, \\[1mm]
-e_{j+1} + \sum\limits_{k=1}^{2n} \alpha_{1,j}^k x_k, & j=1, \ 2 \leq i \leq 2n-2, \\[1mm]
(-1)^ie_{2n}+\sum\limits_{k=1}^{2n}\alpha_{i,j}^kx_k, & i=2n-j+1, \ 2\leq j\leq n, \\[1mm]
(-1)^{i+1}e_{2n}+\sum\limits_{k=1}^{2n}\alpha_{i,j}^kx_k, & j=2n-i+1, \ 2\leq i\leq n, \\[1mm]
\sum\limits_{k=1}^{2n} \alpha_{i,j}^{k} x_k, & \text{otherwise}.
\end{array}\right.\end{equation}

In the multiplication \eqref{eq0}, by taking the basis transformation
$$\begin{array}{ll}
e_1' = e_1 - \sum\limits_{k=2}^{2n-2}\alpha_{1,1}^{k+1}x_{k} - (\alpha_{1,2}^{2n}+\alpha_{2,1}^{2n})x_{2n-1}, &
e_2' = e_2 -\sum\limits_{k=2}^{2n-2}(\alpha_{1,2}^{k+1}+\alpha_{2,1}^{k+1})x_{k},\\[1mm]
e_i^\prime=[e_1^\prime,e_{i-1}^\prime], \quad 3\leq i\leq 2n-1, & e_{2n}^\prime=[e_{2n-1}^\prime,e_2^\prime],
\end{array}$$
we obtain
\begin{equation}\label{eq2}\begin{array}{ll}
[e_1,e_1]= \alpha_{1,1}^{1} x_{1}+\alpha_{1,1}^{2} x_{2}+\alpha_{1,1}^{2n} x_{2n}, & [e_2,e_1]=-e_3+\alpha_{2,1}^{1} x_{1}+\alpha_{2,1}^{2} x_{2}, \\[2mm]
[e_1,e_i]=e_{i+1}, \ 2 \leq  i \leq 2n-2, & [e_{2n-1},e_2]=e_{2n}.
\end{array}\end{equation}

There are difficult to classify the general case, therefore we classify low-dimensional Leibniz algebras of such type. It is well known that $\mathcal{L}_4\cong \mathcal{Q}_4$, therefore we start classifying Leibniz algebras such that $L/ I \cong \mathcal{Q}_6$.

Using the multiplications \eqref{eq1}--\eqref{eq2}, and by checking Leibniz identity,  we get the following family of algebras denoted by
$\lambda(\alpha_1,\alpha_2,\alpha_3,\alpha_4,\alpha_5,\alpha_6,\alpha_7,\alpha_8,\alpha_9):$
\[\left\{\begin{array}{lll}
[e_1,e_1] = \alpha_1 x_6,  & [e_1,e_3]= e_4, & [x_1,e_6]= -2 x_6, \\[1mm]
[e_3, e_1] = -e_4, & [e_5, e_3] =\frac{1}{4}\alpha_3x_6, & [e_2,e_1]=-e_3+\alpha_2x_1+\alpha_3x_2, \\[1mm]
[e_4, e_1] = -e_5, & [x_1, e_3] = -x_3, & [e_2, e_2] = \alpha_5 x_3 + \alpha_7 x_4 + \alpha_8 x_ 5, \\[1mm]
[e_5, e_1] = -\alpha_4 x_6, & [x_4, e_3] =  x_6, & [e_3, e_2] = 4\alpha_2x_2-\alpha_6x_3 -2\alpha_7x_5-\alpha_9x_6, \\[1mm]
[e_6, e_1]=-\frac {1}{4}\alpha_3x_6, & [e_1,e_4] =e_5, & [e_4,e_2]=-2\alpha_2x_3+\frac{1}{2}\alpha_6x_4, \\[1mm]
[x_2,e_1]=x_3, & [e_3,e_4]=e_6, & [e_2,e_3]=-3\alpha_2 x_2 + \alpha_6x_3-\alpha_5x_4+\alpha_7 x_5 + \alpha_ 9 x_ 6, \\[1mm]
[x_3,e_1]=x_4, & [x_1,e_4]=x_4, & [e_3,e_3]=-2 \alpha_2x_3+\frac{1}{2}\alpha_6x_4, \\[1mm]
[x_4,e_1]= x_5, & [x_3, e_4] = x_6, & [e_4, e_3] = -e_6 + 2\alpha_2 x_4 - \frac{1}{2} \alpha_6 x_5, \\[1mm]
[e_1,e_2]=e_3, & [e_1,e_5] =\alpha_4 x_6, & [e_2, e_4]=4\alpha_2x_3-\frac{3}{2}\alpha_6x_4+\alpha_5x_5, \\[1mm]
[e_5, e_2]=e_6, & [x_1,e_5]=-x_5, & [e_4,e_4]=-2\alpha_2x_5-\frac{1}{2}\alpha_3x_6, \\[1mm]
[e_6,e_2]=-\alpha_6x_6, & [x_2,e_5] =x_6, & [e_2,e_5]=-e_6-3\alpha_2x_4+\frac{3}{2}\alpha_6x_5, \\[1mm]
[x_1, e_2]=x_2, & [e_2, e_6]=\frac{5}{2}\alpha_6x_6, & [e_3,e_5]=2\alpha_2x_5+\frac{3}{4}\alpha_3x_6, \\[1mm]
[x_5,e_2] =x_6, & [e_3,e_6]=-2\alpha_2x_6, & [e_1,e_6]=-2\alpha_2x_5-\frac{3}{4}\alpha_3x_6.
\end{array}\right.\]

\begin{teo}\label{t0}
Let $L$ be a $12$-dimensional Leibniz algebra such that $L/ I \cong \mathcal{Q}_6$ and
$I$ is a natural $L/ I$-module with a minimal faithful representation.
Then $L$ is isomorphic to the one of the pairwise non isomorphic algebras given in Appendix A.
\end{teo}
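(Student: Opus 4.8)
The plan is to start from the parametrized family $\lambda(\alpha_1,\dots,\alpha_9)$ already derived above, which enumerates all Leibniz-algebra structures on the $12$-dimensional space with $L/I\cong\mathcal{Q}_6$ and $I=V$ the natural module, subject only to the normal form \eqref{eq2}. The remaining freedom is the group of base changes preserving that normal form; these consist of (i) rescalings and admissible shifts of $e_1,e_2$ by elements of $I$ that keep the structure equations \eqref{eq2} intact, together with the induced changes on $e_3,\dots,e_6$ and on the basis $x_1,\dots,x_6$ of $I$, and (ii) any automorphisms of the $\mathcal{Q}_6$-module $V$ commuting with the action \eqref{eq1}. First I would write down this transformation group explicitly: a general allowed change is $e_1'=\lambda e_1+(\text{shift})$, $e_2'=\mu e_2+(\text{shift})$ with $e_i'$ for $i\ge 3$ and $x_k'$ determined by the recursions in \eqref{eq2} and \eqref{eq1}, plus the scalars acting on $V$. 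I would then compute how each parameter $\alpha_i$ transforms under this group — this is a finite, if tedious, linear-plus-rational calculation.

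Next I would use these transformation rules to reduce the nine parameters to a small set of canonical values. The natural strategy is to treat $\alpha_2$ and $\alpha_6$ (the parameters coupling $e_2,e_3$ to $I$ and appearing most widely) first: their combined scaling/shift behaviour should allow one to normalize $\alpha_2\in\{0,1\}$ or tie it to $\alpha_6$, and similarly handle $\alpha_6$; then $\alpha_1,\alpha_3,\alpha_4,\alpha_5,\alpha_7,\alpha_8,\alpha_9$ fall into cases depending on which earlier parameters vanished. At each branch I would record the stabilizer of the already-normalized parameters and use it to fix the next one, producing a decision tree whose leaves are the candidate canonical forms. One must be careful that the Leibniz identity — already imposed to get $\lambda(\dots)$ — does not hide extra constraints among the $\alpha_i$; I would double-check by re-verifying the Leibniz identity symbolically on the final list, which also catches transcription errors.

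Finally, I would prove that the resulting canonical representatives are pairwise non-isomorphic. For this I would exhibit, for each pair, an isomorphism invariant that distinguishes them: dimensions of $L^k$ in the lower central series of the Leibniz algebra, $\dim\,[L,L]$, $\dim(\mathrm{Ann}_r(L))$ and $\dim(\mathrm{Ann}_l(L))$, the dimension and structure of the ideal generated by squares (which is forced to sit inside $V$ here), and — where coarser invariants coincide — the orbit structure of the remaining continuous parameter under the residual group, so that distinct parameter values in a leaf genuinely give non-isomorphic algebras (or, if some subfamily collapses, identifying the exact equivalence). Collecting all leaves gives the list in Appendix A.

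The main obstacle I expect is bookkeeping in the parameter-reduction step: the transformation group is not semisimple, the action on the $\alpha_i$ is a mix of linear scalings and additive shifts with cross-terms, and the case analysis branches substantially (vanishing vs.\ non-vanishing of $\alpha_2$, then of $\alpha_6$, then of $\alpha_3$, etc.), so organizing the cases to be exhaustive and non-overlapping — and making sure no two branches produce secretly isomorphic algebras — is where the real work lies. The non-isomorphism verification is conceptually routine but requires enough independent invariants to separate all leaves, which may force computing second-order invariants (e.g.\ the action of $\mathrm{Aut}$ on $L/L^2$) for the hardest pairs.
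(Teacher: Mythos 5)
Your proposal follows essentially the same route as the paper: the authors also write a general isomorphism $\varphi\colon L(\alpha)\to L(\alpha')$ determined by the images of $e_1$, $e_2$, $x_1$, derive the restrictions on its coefficients, compute the explicit transformation formulas $\alpha_i\mapsto\alpha_i'$, and then run the case analysis on the orbits to obtain the list in Appendix~A. The only cosmetic difference is that you propose certifying non-isomorphism via structural invariants, whereas the paper reads it off directly from the transformation formulas for the $\alpha_i'$.
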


\begin{proof}
Let $L(\alpha) \coloneqq L$ be the 12-dimensional Leibniz algebra given by $\lambda(\alpha_1,\alpha_2,\alpha_3,\alpha_4,\alpha_5,\alpha_6,\alpha_7,\alpha_8,\alpha_9).$
Let $\varphi \colon L(\alpha)\rightarrow L(\alpha^\prime)$ be the isomorphism of Leibniz algebras:
$$\varphi(e_1)= \sum_{k=1}^{6} A_k e_k+\sum_{k=1}^{6} B_k x_k, \quad \varphi(e_2)= \sum_{k=1}^{6} P_k e_k+\sum_{k=1}^{6} Q_k x_k, \quad \varphi(x_1)= \sum_{k=1}^{6} M_k e_k+\sum_{k=1}^{6} R_k x_k,$$
and the other elements of the new basis are obtained as products of the above elements.

Then, we obtain the following restrictions:
$$A_1P_2R_4\neq0, \quad A_2=B_1=P_1=M_i=0, \quad 1\leq i\leq6,$$
$$A_6=\frac{-A_4^2+2A_3A_5}{2A_1}, \quad P_4=\frac{P_3^2}{2P_2}, \quad R_2=\frac{A_3R_1}{A_1}, \quad R_3=-\frac{A_4R_1}{A_1}, \quad R_4=\frac{A_5R_1}{A_1},$$

$$B_ 2 = \frac {2 A_ 3^2 \alpha_ 2} {A_ 1} ,\quad
B_ 3 = \frac {-4\alpha_ 2 A_ 3 A_ 4 - \alpha_ 6 A_ 3^2 } {2 A_ 1}, \quad
B_ 4 = \frac {2 \alpha_ 2 A_ 4^2 + \alpha_ 6 A_ 3 A_ 4 }{2 A_ 1},$$
$$B_5=\frac{4\alpha_2P_2^2(A_1^2R_6-A_4A_5R_1-A_1A_3R_5)-\alpha_6 A_4^2P_2^2R_1-4A_3^2\alpha_7P_2^2R_1}{4A_1P_2^2R_1}+$$
$$\frac{\alpha_3(4A_1A_5P_2P_3R_1-2A_1A_4P_3^2R_1+4A_1A_3P_2P_5R_1-4A_ 1^2P_2P_6R_1+4A_1^2P_2^2R_5)}{4A_1P_2^2R_1},$$
$$Q_ 1 = -\alpha_ 2 P_ 3, \quad Q_ 2 = \frac {3\alpha_2(A_3P_3-A_4P_2) - \alpha_ 3 A_ 1  P_ 3} {A_ 1},$$
$$Q_ 3 = \frac {2\alpha_ 2 (4 A_ 5 P_ 2^2 - A_ 4 P_ 2 P_ 3 - A_ 3 P_ 3^2) + \alpha_ 3A_ 1  P_ 3^2 + 2 \alpha_ 5A_ 3  P_ 2^2 +2 \alpha_ 6 (A_ 4 P_ 2^2 - 2 A_ 3 P_ 2 P_ 3)} {2 A_ 1 P_ 2}$$
$$Q_ 4 =\frac {\alpha_ 2 (2 A_ 4 P_ 3^2 R_ 1 +  A_ 1 P_ 2^2 R_ 5 - 3 A_ 5  P_ 2 P_ 3 R_ 1 + 2 A_ 1 P_ 2 P_ 6 R_ 1 -
2 A_ 3  P_ 2 P_ 5 R_ 1) } { A_ 1 P_ 2 R_ 1}+$$
$$\frac{- 4\alpha_ 3 A_ 1  P_ 2 P_ 5  - 4 \alpha_ 5 A_ 4  P_ 2^2  +\alpha_ 6 (A_ 3  P_ 3^2  - 6 A_ 5 P_ 2^2  +  2 A_ 4 P_ 2 P_ 3) +   4 \alpha_ 7 A_ 3 P_ 2^2 } {4 A_ 1 P_ 2 },$$
$$Q_ 5 = \frac { \alpha_ 2 (A_ 5 P_ 2^2 P_ 3^2 R_ 1 -
       A_ 4  P_ 2 P_ 3^3 R_ 1 + 2 A_ 3  P_ 2^2 P_ 3 P_ 5 R_ 1 -
       2 A_ 1 P_ 2^2 P_ 3 P_ 6 R_ 1 -
       A_ 1 P_ 2^3 P_ 3 R_ 5)} {A_ 1 P_ 2^3 R_ 1} +$$
$$\frac {8 \alpha_ 5 A_ 5  P_ 2^4 R_ 1 + \alpha_ 6 (12 A_ 3 P_ 2^3 P_ 5 R_ 1 \
- 8 A_ 4  P_ 2^2 P_ 3^2 R_ 1 - 12 A_ 1 P_ 2^3 P_ 6 R_ 1 +
      12 A_ 5 P_ 2^3 P_ 3 R_ 1)} {8 A_ 1 P_ 2^3 R_ 1}+$$
$$\frac{\alpha_ 3 A_ 1  ( P_ 2^2 P_ 3 P_ 5 - P_ 3^4 ) +\alpha_ 7 (
     A_ 4 P_ 2^4  - 2 A_ 3  P_ 2^3 P_ 3 ) +
   \alpha_ 8 A_ 3  P_ 2^4 }{ A_ 1 P_ 2^3 },$$

And
$$\alpha_1^\prime=\displaystyle\frac{\alpha_1}{A_1P_2^2 R_1}, \quad \alpha_2^\prime=\displaystyle\frac{\alpha_2A_1P_2}{R_1}, \quad \alpha_3^\prime=\displaystyle\frac{\alpha_3A_1}{R_1}, \quad \alpha_4^\prime=\displaystyle\frac{\alpha_4 A_1}{P_2R_1}, \quad
\alpha_5^\prime=\displaystyle\frac{\alpha_5P_2}{A_1R_1}, \quad \alpha_6^\prime=\displaystyle\frac{\alpha_6 P_2}{R_1},$$
$$\alpha_7^\prime=\displaystyle\frac{2\alpha_7P_2^3+\alpha_2(P_3^3-6P_2^2P_5)}{2A_1^2P_2^2R_1}, \quad
\alpha_8^\prime=\displaystyle\frac{4\alpha_8P_2^3-\alpha_6(P_3^3-6P_2^2P_5)}{4A_1^3P_2^2R_1},$$
$$\alpha_9^\prime=\frac{\alpha_9}{A_1^2R_1}+\frac{2\alpha_2(2A_5P_2^2P_3R_1-A_4P_2P_3^2R_1-2A_1P_2^2P_6R_1
+2A_1P_2^3R_5+2A_3P_2^2P_5R_1)}{A_1^3P_2^3R_1^2}+\frac{\alpha_3(P_3^3-6P_2^2P_5)}{8A_1^2P_2^3R_1}$$
$$$$
Considering all the possible cases, we obtain the families of algebras listed in the theorem.
\end{proof}

Now we give the classification of Leibniz algebras $L$ such that $L/ I \cong \mathcal{W}_{5}$ and $L/ I \cong \mathcal{R}_{7}$.
We denote the next families of algebras by $\mu(\gamma_1,\gamma_2,\gamma_3,\gamma_4,\gamma_5,\gamma_6,\gamma_7)$ and $\eta(\beta_1,\beta_2,\beta_3,\beta_4)$:

\[\left\{ \begin{array}{lll}
[e_ 1, e_ 1]=\gamma_1x_5, & [e_2,e_1]=-e_3, & [e_3,e_1]=-2e_4, \\[1 mm]
[e_4,e_1]=-3e_5, & [e_5,e_1]=-\gamma_2x_5, & [e_2,e_4]=\frac{1}{2}(\gamma_3x_4+\gamma_2x_5),\\[1 mm]
[x_2,e_1]=x_3, & [x_3,e_1]=x_4, & [e_1,e_2]=e_3, \\[ 1 mm]
[x_2,e_2]=\frac{1}{3}x_4, &  [x_3,e_3]=x_5, & [e_4,e_2]=-\frac{1}{2}\gamma_2x_5,\\[1 mm]
[e_5,e_2]=-\gamma_5x_5, & [x_1,e_2]=\frac{1}{4}x_3, &  [e_2,e_2]=\gamma_3x_2+\gamma_4x_3+\gamma_6x_4, \\[1 mm]
[x_4,e_2]=x_5, & [e_1,e_3]=2e_4, & [e_2,e_3]=e_5-\gamma_3x_3+\gamma_4x_4+\gamma_7x_5, \\[1 mm]
[e_4,e_3]=3\gamma_5x_5, & [x_1,e_3] = \frac{1}{12}x_4, & [e_3,e_2]=-e_5-2\gamma_4x_4-\gamma_7x_5, \\[1 mm]
[e_1,e_4]=3e_5, & [x_1,e_1]=x_2,  & [e_3,e_4]=-3\gamma_5x_5, \\[1 mm]
[x_2,e_4]=\frac{1}{2}x_5, & [e_1,e_5]=\gamma_2x_5, &  [e_2,e_5]=\gamma_5x_5, \\[1 mm]
[x_1,e_5]=\frac{1}{6}x_5.
\end{array}\right.\]
and
\[\left\{ \begin{array}{llll}
[e_ 1, e_ 1] = \beta_ 1 x_ 7, & [e_ 1, e_ 2] = e_ 3, & [e_ 1, e_ 3] = e_ 4, & [e_ 1, e_ 4] = e_ 5,\\[1mm]
[e_ 1, e_ 5] = e_ 6, & [e_ 1, e_ 6] = e_ 7, & [e_ 1, e_ 7] = \beta_ 2 x_ 7, & [e_ 2, e_ 1] = -e_ 3, \\[1mm]
[e_ 2, e_ 2] = \beta_ 3 x_ 4 + \beta_ 4 x_ 6, & [e_ 2, e_ 3] = e_ 5 - \beta_ 3 x_ 5, & [e_ 2, e_ 4] = e_ 6 + \beta_ 3 x_ 6, &
[e_ 2, e_ 5] = e_ 7, \\[1mm]
[e_ 2, e_ 6] = \beta_ 2 x_ 7, & [e_ 3, e_ 1] = -e_ 4, & [e_ 3, e_ 2] = -e_ 5, & [e_ 4, e_ 1] = -e_ 5, \\[1mm]
[e_ 4, e_ 2] = -e_ 6, & [e_ 5, e_ 1] = -e_ 6, & [e_ 5, e_ 2] = -e_ 7, & [e_ 6, e_ 1] = -e_ 7, \\[1mm]
[e_ 6, e_ 2] = -\beta_ 2 x_ 7, & [e_ 7, e_ 1] = -\beta_ 2 x_ 7, & [x_ 1, e_ 1] = x_ 2, & [x_ 1, e_ 2] = x_ 3, \\[1mm]
[x_ 1, e_ 7] = x_ 7, & [x_ 2, e_ 1] = x_ 3, & [x_ 2, e_ 2] = x_ 4, & [x_ 2, e_ 6] = x_ 7, \\[1mm]
[x_ 3, e_ 1] = x_ 4, & [x_ 3, e_ 2] = x_ 5, & [x_ 3, e_ 5] = x_ 7, & [x_ 4, e_ 1] = x_ 5, \\[1mm]
[x_ 4, e_ 2] = x_ 6, & [x_ 4, e_ 4] = x_ 7, & [x_ 5, e_ 1] = x_ 6 , & [x_ 5, e_ 3] = x_ 7, \\[1mm]
[x_ 6, e_ 2] = x_ 7.
\end{array}\right.\]

\begin{teo}\label{t1}
Let $L$ be a $10$-dimensional Leibniz algebra such that $L/ I \cong \mathcal{W}_5$ and
$I$ is a natural $L/ I$-module with a minimal faithful representation.
Then $L$ is isomorphic to the one of the pairwise non isomorphic algebras given in Appendix B.
\end{teo}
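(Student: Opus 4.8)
The plan is to mirror the strategy that proved Theorem \ref{t0}, now working with $\mathcal{W}_5$ in place of $\mathcal{Q}_6$. First I would set up the generic Leibniz algebra $L$ with $L/I\cong\mathcal{W}_5$ and $I=V$ the natural module from the minimal faithful representation of $\mathcal{W}_5$ constructed in the previous section; concretely, the module products $[x_i,e_j]$ are forced to be exactly those read off from $\varphi$ (as displayed after the $\mathcal{W}_n$ proposition, with $n=5$), and $[e_i,I]=0$ by the Leibniz identity. The remaining freedom is in the products $[e_i,e_j]$ for $1\le i,j\le 5$, which a priori have the shape $[e_i,e_j]=(\text{the }\mathcal{W}_5\text{ bracket})+\sum_k \alpha_{i,j}^k x_k$. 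The first step is to impose the full Leibniz identity on all triples of basis vectors; this is a finite linear-plus-quadratic system in the parameters $\alpha_{i,j}^k$. Solving it (a routine but lengthy computation) should collapse the family down to the seven-parameter family $\mu(\gamma_1,\dots,\gamma_7)$ displayed just before the theorem — that is, I would first reduce via a suitable change of basis $e_1'=e_1-\dots$, $e_2'=e_2-\dots$, $e_i'=[e_1',e_{i-1}']$, absorbing as many $\alpha$'s as possible into redefinitions of the generators, exactly as in the $\mathcal{Q}_6$ case.

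Second, I would determine the isomorphism action on this $7$-parameter family. Write a general automorphism-type map $\varphi\colon L(\gamma)\to L(\gamma')$ sending $e_1,e_2,x_1$ to generic linear combinations $\varphi(e_1)=\sum A_k e_k+\sum B_k x_k$, $\varphi(e_2)=\sum P_k e_k+\sum Q_k x_k$, $\varphi(x_1)=\sum M_k e_k+\sum R_k x_k$, with the remaining basis vectors defined as the forced products. Requiring $\varphi$ to respect all the multiplication rules of $\mu(\gamma)$ yields: (i) nondegeneracy constraints (some leading coefficient product $\ne 0$, the analogue of $A_1P_2R_4\ne 0$); (ii) a list of forced values for the auxiliary coefficients $B_k,Q_k,R_k,M_k$ in terms of the free ones; and (iii) the transformation formulas $\gamma_i'=\gamma_i'(A,P,R,\gamma)$. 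I expect $\gamma_1,\dots,\gamma_6$ to transform by simple monomial scalings (as $\alpha_1,\dots,\alpha_6$ did in Theorem \ref{t0}) and $\gamma_7$ to pick up affine correction terms involving the other $\gamma$'s and the off-diagonal coefficients.

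Third, using these transformation rules I would run the orbit analysis: split into cases according to which of $\gamma_1,\gamma_2,\gamma_3,\gamma_5$ (the parameters that scale independently) vanish, normalize the nonzero ones to $1$ by choosing $A_1,P_2,R_1,\dots$ appropriately, then use the residual freedom to clean up the remaining parameters, in particular killing $\gamma_4,\gamma_6,\gamma_7$ where the affine terms permit. Each surviving case gives one algebra (or a one-parameter family) in the list of Appendix B, and one must check pairwise non-isomorphism across cases by comparing invariants (dimensions of $I$, of $[L,L]$, of the right/left annihilators, the structure of $L/I$ which is fixed to be $\mathcal{W}_5$, etc.) or by showing the case-defining conditions are isomorphism-invariant. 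The main obstacle is the bookkeeping in steps two and three: the transformation law for $\gamma_7$ (and possibly $\gamma_4$) is the delicate one, since getting its affine part exactly right is what decides how many genuinely distinct algebras remain, and the case split must be organized so that no orbit is counted twice and none is missed. Everything else is mechanical verification of the Leibniz identity and of the claimed isomorphisms.
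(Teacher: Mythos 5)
Your proposal follows exactly the route the paper takes: the authors state explicitly that Theorems \ref{t1} and \ref{t2} are proved ``by applying arguments used in Theorem \ref{t0}'', i.e.\ impose the Leibniz identity on the generic brackets over the fixed $\mathcal{W}_5$-module action, normalize by the base change $e_i'=[e_1',e_{i-1}']$ to reach the family $\mu(\gamma_1,\dots,\gamma_7)$, and then carry out the orbit analysis under generic maps on $e_1,e_2,x_1$. Your outline matches this in structure and in the expected shape of the transformation laws, so it is essentially the paper's own proof.
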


\begin{teo}\label{t2}
Let $L$ be a $14$-dimensional Leibniz algebra such that $L/ I \cong \mathcal{R}_7$ and
$I$ is a natural $L/ I$-module with a minimal faithful representation.
Then $L$ is isomorphic to the one of the following pairwise non isomorphic algebras:
$$\begin{array}{lllll}
\eta(0,0,0,0,0),&\eta(0,0,0,1),&\eta(0,0,1,0),&\eta(0,1,0,1),&\eta(0,1,\beta_3,0)_{\beta_3\neq0},\\[1mm]
\eta(1,0,0,0,0),&\eta(1,0,0,1),&\eta(1,0,1,0),&\eta(1,1,0,\beta_4),&\eta(1,1,\beta_3,0)_{\beta_3\neq0},\\[1mm]
\end{array}$$
with $\beta_3,\beta_4\in \mathbb{C}.$
\end{teo}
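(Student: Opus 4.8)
The plan is to mirror the proof of Theorem~\ref{t0}. Fix on $L$ a basis $\{e_1,\dots,e_7,x_1,\dots,x_7\}$ in which $\overline{e_1},\dots,\overline{e_7}$ is the standard basis of $\mathcal{R}_7$, the $x_j$ form a basis of $I$, and the action $I\times L/I\to I$ is the minimal faithful representation of $\mathcal{R}_7$ computed above, i.e.\ $[x_i,e_1]=x_{i+1}$ for $1\le i\le 5$, $[x_i,e_2]=x_{i+2}$ for $1\le i\le 4$, $[x_{8-j},e_j]=x_7$ for $2\le j\le 7$, and all other products of an $x_i$ with an $e_j$ zero. Since $[L,I]=0$, the only products still to be pinned down are the $[e_i,e_j]$, which we write as $[e_i,e_j]=(\text{bracket of }\mathcal{R}_7)+\sum_{k=1}^{7}\beta_{i,j}^kx_k$, the Lie part being forced by $L/I\cong\mathcal{R}_7$.

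First I would impose the Leibniz identity $[[u,v],w]=[[u,w],v]+[u,[v,w]]$ on every triple of basis vectors; using $[L,I]=0$ this turns into a (large, but linear) system of relations among the scalars $\beta_{i,j}^k$. As in the $\mathcal{Q}_6$ case, a normalising change of basis of the shape $e_1'=e_1+\sum_k a_kx_k$, $e_2'=e_2+\sum_k b_kx_k$, together with $e_i'=[e_1',e_{i-1}']$ for $3\le i\le 7$, absorbs all but finitely many of the remaining parameters. I expect the outcome to be exactly the four-parameter family $\eta(\beta_1,\beta_2,\beta_3,\beta_4)$ displayed before the theorem: $\beta_1$ is the coefficient of $x_7$ in $[e_1,e_1]$, $\beta_2$ governs the ``tail'' brackets $[e_1,e_7]$, $[e_2,e_6]$, $[e_6,e_1]$, $[e_7,e_1]$, while $\beta_3,\beta_4$ enter through $[e_2,e_2]=\beta_3x_4+\beta_4x_6$ and propagate, via the identities, into $[e_2,e_3]$ and $[e_2,e_4]$. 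This reduction is the bulkiest part of the argument, but it is purely mechanical bookkeeping.

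Next I would classify the algebras $\eta(\beta)$ up to isomorphism. Let $\varphi\colon\eta(\beta)\to\eta(\beta')$ be a Leibniz isomorphism determined by general images $\varphi(e_1),\varphi(e_2),\varphi(x_1)$, the remaining new basis vectors being iterated brackets of these; imposing that $\varphi$ respect all products produces, just as in Theorem~\ref{t0}, a list of constraints on the entries of $\varphi$ together with transformation formulas $\beta_i'=\beta_i'(\beta,\varphi)$. One reads off that $\beta_1$ and $\beta_2$ each get multiplied by a nonzero scalar, so for each of them only the dichotomy ``$0$ versus nonzero'' is an invariant and a nonzero value may be rescaled to $1$; $\beta_3$ likewise scales, and $\beta_4$ transforms affinely in the other parameters. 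Which of $\beta_3,\beta_4$ can actually be normalised, and which must survive as a continuous modulus, depends on how much of the scaling freedom has already been used to normalise $\beta_1$ and $\beta_2$ — for example, once $\beta_1=\beta_2=1$ only a one-dimensional torus remains, which is why one of $\beta_3,\beta_4$ then stays free, producing the families $\eta(1,1,0,\beta_4)$ and $\eta(1,1,\beta_3,0)_{\beta_3\neq0}$. Splitting into the cases $\beta_1\in\{0,1\}$, $\beta_2\in\{0,1\}$ and analysing the residual action on $(\beta_3,\beta_4)$ in each case reproduces exactly the ten families of the list.

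The main obstacle is organisational rather than conceptual. The first hurdle is to arrange the Leibniz-identity relations and the normalising change of basis so that the collapse to $\eta(\beta_1,\beta_2,\beta_3,\beta_4)$ is transparent and no admissible structure constant is overlooked. The second is to push the parameter-transformation formulas consistently through every branch of the isomorphism analysis and to keep the normalisations chosen in different branches mutually compatible, so that no orbit is counted twice or omitted. Finally, the pairwise non-isomorphism asserted in the theorem must be confirmed by exhibiting, for each pair of algebras in the list, an invariant that separates them — for instance whether $[e_1,e_1]=0$, whether the algebra is a Lie algebra, or the dimension of its right annihilator.
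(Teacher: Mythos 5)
Your proposal follows essentially the same route as the paper: the authors prove Theorem~\ref{t2} by exactly the arguments of Theorem~\ref{t0} (impose the Leibniz identity on the generic multiplication table, absorb parameters by a basis change of the form $e_1'=e_1+\sum_k a_kx_k$, $e_2'=e_2+\sum_k b_kx_k$, $e_i'=[e_1',e_{i-1}']$, then run a general isomorphism $\varphi$ determined by $\varphi(e_1),\varphi(e_2),\varphi(x_1)$ and split into cases on which parameters vanish). Your outline of the reduction to $\eta(\beta_1,\beta_2,\beta_3,\beta_4)$ and of the scaling analysis of the parameters matches the paper's (unwritten but indicated) argument, and your remark on verifying pairwise non-isomorphism is a sensible addition.
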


The proofs of Theorem \ref{t1} and Theorem \ref{t2} are carried out by applying arguments used in Theorem \ref{t0}.

\newpage
\appendix
\section{First Appendix}

\[\begin{array}{|c|c|c|c|}
\hline
\lambda(0,0,0,0,0,0,0,0,0)&\lambda(0,0,0,0,0,0,0,0,1)&\lambda(0,0,0,0,0,0,0,1,0)&\lambda(0,0,0,1,1,1,\alpha_7,0,\alpha_9)\\
\hline
\lambda(0,0,0,0,0,0,1,0,0)&\lambda(0,0,0,0,0,0,1,0,1)&\lambda(0,0,0,0,0,0,1,1,0)&\lambda(1,0,0,0,1,0,1,\alpha_8,\alpha_9)\\
\hline
\lambda(0,0,0,0,0,1,0,0,0)&\lambda(0,0,0,0,0,1,0,0,1)&\lambda(0,0,0,0,0,1,1,0,0)&\lambda(1,0,1,0,1,0,\alpha_7,\alpha_8,0)\\
\hline
\lambda(0,0,0,0,1,0,0,0,0)&\lambda(0,0,0,0,1,0,0,0,1)&\lambda(0,0,0,0,1,0,0,1,0)&\lambda(0,0,1,1,0,0,1,\alpha_8,0)\\
\hline
\lambda(0,0,0,0,0,0,0,1,1)&\lambda(0,0,0,1,0,0,0,1,1)&\lambda(0,0,0,0,1,1,\alpha_7,0,0)&\lambda(0,0,0,0,1,1,\alpha_7,0,1)\\
\hline
\lambda(0,0,0,1,0,0,0,0,0)&\lambda(0,0,0,1,0,0,0,0,1)&\lambda(0,0,0,1,0,0,0,1,0)&\lambda(0,0,0,0,1,0,1,\alpha_8,1)\\
\hline
\lambda(0,0,0,1,0,0,1,0,0)&\lambda(0,0,0,1,0,0,1,0,1)&\lambda(0,0,0,1,0,0,1,1,\alpha_9)&\lambda(1,0,0,1,1,0,\alpha_7,\alpha_8,\alpha_9)\\
\hline
\lambda(0,0,0,1,0,1,0,0,1)&\lambda(0,0,0,1,0,1,0,0,0)&\lambda(0,0,0,1,1,0,0,0,0)&\lambda(0,0,0,1,1,0,1,\alpha_8,\alpha_9)\\
\hline
\lambda(0,0,0,0,0,0,1,1,1)&\lambda(0,0,0,1,1,0,0,0,1)&\lambda(0,0,0,0,1,0,1,\alpha_8,0)&\lambda(0,0,1,0,0,1,1,\alpha_8,0)\\
\hline
\lambda(0,0,1,0,0,0,0,1,0)&\lambda(0,0,1,0,0,0,1,0,0)&\lambda(0,0,1,0,0,0,1,1,0)&\lambda(0,0,1,0,1,1,\alpha_7,\alpha_8,0)\\
\hline
\lambda(0,0,1,0,0,1,0,1,0)&\lambda(0,0,1,0,0,0,0,0,0)&\lambda(0,0,1,0,1,0,0,0,0)&\lambda(0,0,1,1,0,1,\alpha_7,0,0)\\
\hline
\lambda(0,0,0,0,0,1,1,0,1)&\lambda(0,0,1,0,0,1,0,0,0)&\lambda(0,0,1,1,0,0,0,0,0)&\lambda(1,0,1,1,\alpha_5,0,\alpha_7,\alpha_8,0)\\
\hline
\lambda(0,0,0,0,1,0,0,1,1)&\lambda(0,0,1,0,1,0,0,1,0)&\lambda(1,0,0,0,0,0,1,1,\alpha_9)&\lambda(0,0,1,1,1,0,\alpha_7,\alpha_8,0)\\
\hline
\lambda(0,1,0,0,0,0,0,0,0)&\lambda(0,1,0,0,0,0,0,1,0)&\lambda(0,1,0,0,0,1,0,\alpha_8,0)&\lambda(0,1,0,0,1,\alpha_6,0,\alpha_8,0)\\
\hline
\lambda(0,1,0,1,0,0,0,0,0)&\lambda(0,1,0,1,0,0,0,1,0)&\lambda(0,1,0,1,0,1,0,\alpha_8,0)&\lambda(0,1,0,1,1,\alpha_6,0,\alpha_8,0)\\
\hline
\lambda(0,1,1,0,0,0,0,0,0)&\lambda(0,1,1,0,0,0,0,1,0)&\lambda(0,1,1,0,0,1,0,\alpha_8,0)&\lambda(0,1,1,0,1,\alpha_6,0,\alpha_8,0)\\
\hline
\lambda(0,0,0,1,0,0,0,1,1)&\lambda(1,0,0,0,0,0,0,0,0)&\lambda(1,0,0,0,1,0,0,1,\alpha_9)&\lambda(1,0,1,1,\alpha_5,\alpha_6,\alpha_7,0,0)\\
\hline
\lambda(1,0,0,0,0,0,0,1,1)&\lambda(1,0,0,0,0,0,1,0,0)&\lambda(1,0,1,0,0,1,\alpha_7,0,0)&\lambda(0,0,1,1,1,\alpha_6,\alpha_7,0,0)_{\alpha_6\neq0}\\
\hline
\lambda(1,0,0,0,0,1,0,0,0)&\lambda(1,0,0,0,0,1,0,0,1)&\lambda(1,0,0,0,0,1,1,0,\alpha_9)&\lambda(1,1,0,1,\alpha_5,\alpha_6,0,\alpha_8,0)\\
\hline
\lambda(1,0,0,0,1,0,0,0,1)&\lambda(0,0,1,1,0,0,0,1,0)&\lambda(0,0,0,1,1,0,0,1,\alpha_9)&\lambda(1,0,0,0,1,1,\alpha_7,0,\alpha_9)\\
\hline
\lambda(1,0,0,1,0,0,0,0,0)&\lambda(1,0,0,1,0,0,0,0,1)&\lambda(1,0,0,1,0,0,0,1,\alpha_9)&\lambda(1,0,0,1,0,0,1,\alpha_8,\alpha_9)\\
\hline
\lambda(1,0,1,0,0,0,0,0,0)&\lambda(1,0,1,0,0,0,0,1,0)&\lambda(0,0,0,1,0,1,1,0,\alpha_9)&\lambda(1,0,0,1,1,\alpha_6,\alpha_7,0,\alpha_9)_{\alpha_6\neq0}\\
\hline
\lambda(1,0,0,0,1,0,0,0,0)&\lambda(1,0,0,0,0,0,1,0,1)&\lambda(0,0,1,0,1,0,1,\alpha_8,0)&\lambda(1,0,1,0,1,\alpha_6,\alpha_7,0,0)_{\alpha_6\neq0}\\
\hline
\lambda(1,0,0,0,0,0,0,1,0)&\lambda(1,0,0,0,0,0,0,0,1)&\lambda(1,1,0,0,0,1,0,\alpha_8,0)&\lambda(1,1,0,0,1,\alpha_6,,\alpha_8,0)\\
\hline
\lambda(1,1,0,0,0,0,0,0,0)&\lambda(1,1,0,0,0,0,0,1,0)&\lambda(1,0,1,0,0,0,1,\alpha_8,0)&\lambda(1,1,1,\alpha_4,\alpha_5,\alpha_6,0,\alpha_8,0)\\
\hline
\end{array}\]
with $\alpha_4,\alpha_5,\alpha_6,\alpha_7,\alpha_8,\alpha_9\in \mathbb{C}.$

\section{Second Appendix}

\[\begin{array}{|c|c|c|c|}
\hline
\mu(0,0,0,0,0,0,0)&\mu(0,0,1,0,1,0,0)&\mu(0,1,0,0,0,\gamma_6,0)&\mu(0,0,1,1,\gamma_5,0,0)_{\gamma_5\neq0}\\
\hline
\mu(0,0,0,0,0,0,1)&\mu(1,0,0,0,0,0,0)&\mu(1,0,0,0,1,\gamma_6,0)&\mu(0,1,0,1,\gamma_5,\gamma_6,0)_{\gamma_5\neq0}\\
\hline
\mu(0,0,0,0,0,1,0)&\mu(1,0,0,0,0,0,1)&\mu(0,1,0,1,0,\gamma_6,\gamma_7)&\mu(0,1,1,\gamma_4,\gamma_5,0,0)_{\gamma_5\neq0}\\
\hline
\mu(0,0,0,0,0,1,1)&\mu(0,0,0,1,0,1,\gamma_7)&\mu(1,0,0,1,0,\gamma_6,\gamma_7)&\mu(1,0,0,1,\gamma_5,\gamma_6,0)_{\gamma_5\neq0}\\
\hline
\mu(0,0,0,0,1,0,0)&\mu(0,0,0,1,\gamma_5,1,0)&\mu(1,0,1,\gamma_4,0,0,\gamma_7)&\mu(1,0,1,\gamma_4,\gamma_5,0,0)_{\gamma_5\neq0}\\
\hline
\mu(0,0,0,0,1,1,0)&\mu(0,0,1,1,0,0,\gamma_7)&\mu(0,1,1,\gamma_4,0,0,\gamma_7)&\mu(1,1,0,\gamma_4,\gamma_5,\gamma_6,0)_{\gamma_5\neq0}\\
\hline
\mu(0,0,0,1,0,0,1)&\mu(0,1,0,0,0,\gamma_6,1)&\mu(1,1,0,\gamma_4,0,\gamma_6,\gamma_7)&\mu(1,1,\gamma_3,\gamma_4,0,0,\gamma_7)_{\gamma_3\neq0}\\
\hline
\mu(0,0,1,0,0,0,0)&\mu(1,0,0,0,0,1,\gamma_7)&\mu(0,0,0,1,\gamma_5,0,0)_{\gamma_5\neq0}&\mu(1,1,\gamma_3,\gamma_4,\gamma_5,0,0)_{\gamma_3\neq0,\gamma_5\neq0}\\
\hline
\mu(0,0,1,0,0,0,1)&\mu(0,1,0,0,1,\gamma_6,0)&&\\
\hline
\end{array}\]
with $\gamma_3,\gamma_4,\gamma_5,\gamma_6,\gamma_7\in \mathbb{C}.$

\newpage


\end{document}